\begin{document}
\newtheorem{lemma}{Lemma}\newcommand{\rco}{\color{red}}
\newtheorem{pron}{Proposition}
\newtheorem{thm}{Theorem}
\newtheorem{cor}{Corollary}
\newtheorem{exam}{Example}
\newtheorem{defin}{Definition}
\newtheorem{rem}{Remark}
\newtheorem{ass}{Assumption}
\newtheorem{thme}{Theorem}
\newcommand{\la}{\frac{1}{\lambda}}
\newcommand{\sectemul}{\arabic{section}}
\renewcommand{\theequation}{\sectemul.\arabic{equation}}
\renewcommand{\thepron}{\sectemul.\arabic{pron}}
\renewcommand{\thelemma}{\sectemul.\arabic{lemma}}
\renewcommand{\thethm}{\sectemul.\arabic{thm}}
\renewcommand{\thethme}{\sectemul.\Alph{thme}}
\renewcommand{\therem}{\sectemul.\arabic{rem}}
\renewcommand{\thecor}{\sectemul.\arabic{cor}}
\renewcommand{\theexam}{\sectemul.\arabic{exam}}
\renewcommand{\thedefin}{\sectemul.\arabic{defin}}
\renewcommand{\theass}{\Alph{ass}}

\numberwithin{equation}{section}\numberwithin{rem}{section}
\numberwithin{cor}{section}
\def\REF#1{\par\hangindent\parindent\indent\llap{#1\enspace}\ignorespaces}
\def\lo{\left}
\def\ro{\right}
\def\be{\begin{equation}}
\def\ee{\end{equation}}
\def\beq{\begin{eqnarray*}}
\def\eeq{\end{eqnarray*}}
\def\bea{\begin{eqnarray}}
\def\eea{\end{eqnarray}}
\def\d{\Delta_T}
\def\r{random walk}
\def\o{\overline}
\def\P{\mathbb{P}}
\def\J{\mathcal{J}}
\title{\large\bf The product of dependent random variables with applications to a discrete-time risk model\thanks{Research supported by National Natural Science Foundation of China
(No. 11401415), and the Priority Academic Program Development of Jiangsu Higher Education
Institutions.} 
}

\author{\small Jikun Chen, Hui Xu, Fengyang Cheng\thanks{ Corresponding author. E-mail:
chengfy@suda.edu.cn}~\\
{\small\it Department of Mathematics, Soochow University, Suzhou, 215006, China}\\
 }
\date{}

\maketitle {\noindent\small {\bf Abstract }}
{\small Let $X$ be a real valued random variable with an unbounded distribution $F$ and let $Y$ be a nonnegative valued random variable with a distribution $G$.  Suppose that $X$ and $Y$ satisfy that
 \begin{eqnarray*}
P(X>x|Y=y)\sim h(y)P(X>x)
\end{eqnarray*}
holds uniformly for $y\geq0$ as $x\to \infty$, where $h(\cdot)$ is a positive measurable function. Under the condition that $\overline{G}(bx)=o(\overline H(x))$ holds for all constant $b>0$, we proved that $F\in\mathcal{L}(\gamma)$ for some $\gamma\geq 0$ implied $H\in \mathcal{L}(\gamma/\beta_G)$ and that $F\in\mathcal{S}(\gamma)$ for some $\gamma\geq 0$ implied $H\in \mathcal{S}(\gamma/\beta_G)$, where $H$ is the distribution of the product $XY$, and $\beta_G$ is the right endpoint of $G$, that is,
   $\beta_G=\sup\{y:~G(y)<1\}\in (0,\infty],$
 and when $\beta_G=\infty$, $\gamma/\beta_G$ is understood as 0.

Furthermore, in a discrete-time risk model in which the net insurance
loss and the stochastic discount factor are equipped with a dependence structure, a general asymptotic formula for the finite-time ruin probability is obtained when the net insurance
loss has a subexponential tail.\\

\noindent {\small{\bf Keywords:} long-tailed distribution; dependent random variables, finite-time ruin probability; risk model; subexponential distribution.}\\

\noindent {\small{\bf 2010 Mathematics Subject Classification:} Primary 60E05; 62E10}\\

\section{\bf Introduction  }
\mbox{}

Throughout this paper, all limit relationships are for $x\to\infty$ unless otherwise stated. For two positive functions
$a(\cdot)$ and $b(\cdot)$, we write $a(x)\sim b(x)$ if $\lim a(x)/b(x)=1$, write $a(x)=o(b(x))$ if $\lim a(x)/b(x)=0$,
and write $a(x)=O(b(x))$ if $\limsup a(x)/b(x)<\infty$.
For any  distribution $V$, its tail is denoted by $\overline V(x)=1-V(x)=V(x,\infty)$ for all $x$.
For any r.v. $Z$, its possible value set is denoted by $\mathfrak{D}_Z=\{x\in R:P(Z\in(x-\delta,x+\delta))>0\text{\ for\ all\ }\delta>0\}$.

Throughout this paper, let $X$ be a real valued random variable (r.v.) with a distribution $F$ and let $Y$ be a nonnegative valued r.v. with a distribution $G$, and let $H$ be the distribution of their product $XY$. To avoid triviality, assume that neither $X$ nor $Y$ is degenerate at zero.

\subsection{Some distribution classes}\

A distribution $V$ is said to be unbounded (above) if $\overline{V}(x)>0$ holds for all $x>0$.
An unbounded distribution $V$ supported on $(-\infty,\infty)$ is said to belong to the distribution class $\mathcal{L}(\gamma)$ for some $\gamma\ge0$, if  
\begin{eqnarray}\label{s101}
\overline V(x-t)\sim e^{\gamma t}\overline V(x)
\end{eqnarray}
holds for all constant $t>0$. 
We remark that, when $F$ is lattice and $\gamma>0$, $x$ and $t$ in (\ref{s101}) should be restricted to values of the lattice span.

An unbounded distribution $V$ supported on $(-\infty,\infty)$ is said to belong to the distribution class $\mathcal{S}(\gamma)$ for some $\gamma\ge0$, if  $F\in\mathcal{L}(\gamma)$ and the relation
\begin{eqnarray}\label{s102}
  \overline {V^{*2}}(x)\sim 2c\overline{V}(x)
\end{eqnarray}
holds for some positive number $c$, where $V^{*2}(x)$ denotes the two-fold convolution of $V$ with itself and
$c=\int_{-\infty}^\infty e^{\gamma y}V(dy).$
 When $\gamma=0$, relation (\ref{s101}) represents the well known long-tailed distribution class $\mathcal{L}=\mathcal{L}(0)$, and relations (\ref{s101}) and (\ref{s102}) represent the subexponential distribution class $\mathcal{S}=\mathcal{S}(0)$.

It is well known that if $V\in\mathcal{L}$, then $V$ is heavy-tailed (that is, $\int_0^\infty e^{\alpha y}V(dy)=\infty$ holds for all constant $\alpha>0$) and if $V\in\mathcal{L}(\gamma)$ for some $\gamma>0$, then it is light-tailed (that is, $\int_0^\infty e^{\alpha y}V(dy)<\infty$ for some constant $\alpha>0$).

For reviews on classes $\mathcal{L}(\gamma)$ and $\mathcal{S}(\gamma)$, the readers are referred to Cline and Samorodnitsky (1994), Foss et al. (2013)
and Tang (2006a, 2006b, 2008), among others.

\subsection{Some dependent structures}\

In this subsection, we will introduce some dependence structures.  The first is called Asimit dependent structure, which was introduced by Asimit and Jones (2008).
\begin{ass}\label{ass1}
There is a measurable function $h(\cdot): [0, \infty) \rightarrow(0, \infty)$ such that
\begin{eqnarray}
P(X>x|Y=y)\sim h(y)P(X>x) \label{eq201}
\end{eqnarray}
holds uniformly for $y\geq0$ as $x\to \infty$. Here the uniformity is understood as
\begin{eqnarray*}
\lim_{x\rightarrow\infty}\sup_{y\geq0}\Big |\frac{P(X>x|Y=y)}{h(y)P(X>x)}-1\Big |=0.
\end{eqnarray*}
\end{ass}

We remark that, when $y\not\in \mathfrak{D}_Y$
, the conditional probability in (\ref{eq201}) is simply understood as unconditional probability, and hence, $h(y) = 1$ for such $y$. Clearly, by (\ref{eq201}), we have $E(h(Y)) = 1$.

 It is worth noting that Assumption \ref{ass1} is a very relaxed dependence structure, which contains many common dependence structure. For example, it contains the following Sarmonov dependent strucure:

We say that a random vector $(X,Y)$ follows a bivariate Sarmonov distribution, that is,
\begin{eqnarray}
P(X\in dx,Y\in dy)=(1+\theta\phi_1(x)\phi_2(y))F(dx)G(dy),\ -\infty<x<\infty,\ y\geq0,\label{sam}
\end{eqnarray}
where $\phi_1(x)$ and $\phi_2(y)$ are two measurable functions 
 and the parameter $\theta$ is a real constant, which satisfy
\begin{eqnarray}\label{sam1}
&&E\phi_1(X)=E\phi_2(Y)=0,
\end{eqnarray}
and
\begin{eqnarray}\label{sam2}
&&1+\theta\phi_1(x)\phi_2(y)\geq0.
\end{eqnarray}

 For more details on multivariate Sarmanov distributions, one can refer to Lee (1996) and
Kotz et al. (2000) among others. There are many choices for the kernels $\phi_1(x)$ and $\phi_2(y)$. One choice is that we take
 $\phi_1(x)=(e^{-x}-a)\mathbb{I}(x>0)$ with $a=\frac{Ee^{-X}\mathbb{I}(X\geq 0)}{P(X\geq 0)}$ and $\phi_2(y)=e^{-y}-E(e^{-Y})$ for all $x\in \mathfrak{D}_X$ and $y\in\mathfrak{D}_Y$, where $\mathbb{I}(A)$ is the indicator function of the set $A$. Another choice is that we take
 $\phi_1(x)=1-2F(x)$ and $\phi_2=1-2G(y)$ for all $x\in \mathfrak{D}_X$ and $y\in\mathfrak{D}_Y$, which leads to another common dependent stucture -- FGM dependent structure:

We say that a random vector $(X,Y)$ follows a Farlie-Gumbel-Morgenstern (FGM) distribution, if
\begin{eqnarray}
P(X>x,Y>y)=\overline F(x)\overline G(y)(1+\theta F(x) G(y))\label{fgm}
\end{eqnarray}
holds for all $x\in \mathfrak{D}_X$ and $y\in\mathfrak{D}_Y$, where $\theta\in[-1,1]$ is a constant.

\subsection{Brief reviews on products}\

Many papers in the literature have been devoted to the tail behavior of the product $Z=XY$. When $X$ and $Y$ are independent, Cline and Samorodnitsky (1994) obtained the following result:
\begin{thme}\label{1a}
Suppose that $X$ and $Y$ are independent r.v.s. If $F\in\mathcal{S}$ and there is a function $a(\cdot):[0,\infty)\to(0,\infty)$ such that

$(a)$ $a(x)\uparrow\infty$;

$(b)$ $a(x)/x\downarrow 0$;

$(c)$ $\overline F(x-a(x))\sim \overline F(x)$;

$(d)$ $\overline G(a(x))=o(\overline H(x))$.\\
Then $H\in\mathcal{S}$.
\end{thme}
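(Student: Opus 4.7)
The plan is to verify the two defining conditions of $\mathcal{S}$ for $H$ separately: long-tailedness ($H\in\mathcal{L}$) and the tail-doubling identity $\overline{H^{*2}}(x)\sim 2\overline H(x)$. By independence and $Y\ge 0$, for $x>0$ we have $\overline H(x)=\int_{(0,\infty)}\overline F(x/y)\,G(dy)$, and condition~(d) gives $P(XY>x,\,Y>a(x))\le\overline G(a(x))=o(\overline H(x))$, so throughout the argument I would restrict attention to $y\in(0,a(x)]$. A basic consequence of (a)--(c), to be used repeatedly, is the uniform shift property $\overline F(x-r(x))\sim\overline F(x)$ for any deterministic $r(x)\in[0,a(x)]$, obtained by sandwiching via monotonicity of $\overline F$ and (c).

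For $H\in\mathcal{L}$, fix $t>0$; the task is to show $\int_{(0,a(x)]}\bigl[\overline F((x-t)/y)-\overline F(x/y)\bigr]\,G(dy)=o(\overline H(x))$. I would split the inner range at some small $\delta>0$. On $y\in[\delta,a(x)]$ the shift $t/y\le t/\delta$ is bounded while the base $x/y\ge x/a(x)\to\infty$, so $F\in\mathcal{L}$ forces the integrand to be $o(\overline F(x/y))$ uniformly, contributing $o(\overline H(x))$. On $y\in(0,\delta]$, monotonicity of $\overline F$ bounds the integrand by $\overline F((x-t)/\delta)$, giving a residual of order $\overline F(x)\,G((0,\delta])$ after invoking the uniform shift property; comparing this against a lower bound on $\overline H(x)$ obtained from any positive-$G$-mass subinterval of $(0,\infty)$, and sending $\delta\downarrow 0$ after $x\to\infty$, renders it negligible. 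Combining, $\overline H(x-t)\sim\overline H(x)$.

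For subexponentiality, let $(X_1,Y_1),(X_2,Y_2)$ be independent copies of $(X,Y)$ and decompose
\[
\overline{H^{*2}}(x)=P\bigl(\textstyle\max_i X_iY_i>x\bigr)+P\bigl(X_1Y_1+X_2Y_2>x,\,\max_i X_iY_i\le x\bigr).
\]
The first probability equals $2\overline H(x)-\overline H(x)^2\sim 2\overline H(x)$ trivially, so the whole task reduces to showing the second is $o(\overline H(x))$. Conditioning on $(Y_1,Y_2)$ and again truncating each $y_i$ to $(0,a(x)]$ via (d), this becomes a double integral of $P(X_1y_1+X_2y_2>x,\,\max_i X_iy_i\le x)$ against $G(dy_1)G(dy_2)$; at each fixed $(y_1,y_2)$ the subexponentiality of $F$, applied to the independent scaled pair $X_1y_1,X_2y_2$, renders the integrand $o(\overline F(x/y_1)+\overline F(x/y_2))$, which integrates to $o(\overline H(x))$ provided a uniform envelope is available. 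The main obstacle throughout is producing that uniform envelope, i.e.\ controlling the small-$y$ regime where the shift $t/y$ (in the $\mathcal{L}$ argument) or the cross-tail $\overline F(x/y)$ (in the $\mathcal{S}$ argument) becomes pathological; conditions~(a)--(c) are engineered exactly so that the auxiliary function $a$, together with (d), shuts down those dangerous regions.
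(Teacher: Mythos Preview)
The paper does not prove this statement: Theorem~1.A is quoted from Cline and Samorodnitsky (1994) as a known background result, and no argument for it appears anywhere in the paper. There is therefore nothing in the paper to compare your proposal against.

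On the proposal itself, the $\mathcal{L}$ half is essentially sound (modulo a slip---the residual on $(0,\delta]$ is of order $\overline F(x/\delta)\,G((0,\delta])$, not $\overline F(x)\,G((0,\delta])$, though the conclusion survives after comparison with the lower bound $\overline H(x)\ge \overline F(x/\delta)G([\delta,M])$). The $\mathcal{S}$ half has real gaps. First, your displayed decomposition is not an equality: $\{\max_i X_iY_i>x\}$ does not force $\{X_1Y_1+X_2Y_2>x\}$ when the other summand can be negative, so you only get the upper bound this way and must supply the lower bound separately (routine once $H\in\mathcal L$). Second, the claim that ``subexponentiality of $F$, applied to the scaled pair $X_1y_1,X_2y_2$'' gives the pointwise estimate $o\bigl(\overline F(x/y_1)+\overline F(x/y_2)\bigr)$ is not immediate from $F\in\mathcal S$ when $y_1\ne y_2$; that statement is itself a lemma already requiring conditions (a)--(c). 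Third---and you acknowledge this yourself---the uniform envelope over $(y_1,y_2)\in(0,a(x)]^2$ is the entire substance of the theorem, and you have not constructed it. The original Cline--Samorodnitsky argument spends its effort precisely here, using $a(\cdot)$ to partition the integration region so that on each piece either a uniform big-jump estimate applies or the total contribution is absorbed by condition~(d). Without that construction, what you have is a plan rather than a proof.
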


 As pointed out by Tang (2006), condition (c) requests that $a(\cdot)$ should not be too small while
condition (d) requests that it should not be too big. Hence, the requirement that conditions (c) and (d) are simultaneously satisfied
is too restrictive in applications. In recent years, many scholars have tried to weaken this restriction, and obtained some interesting results. Under an extra condition that $\limsup \frac{\overline F(vx)}{\overline F(x)}<1,$ Theorem 2.1 in Tang (2006) relaxed conditions (c) and (d) to that
\begin{ass}\label{ass2}$\overline{G}(bx)=o(\overline H(x))$ holds for all constant $b>0$.\end{ass}
If $(X,Y)$ satisfies Assumption \ref{ass2} and $G$ is unbounded, Theorem 2.1 in Tang (2008) proved that $F\in \mathcal{S}(\gamma)$ for some $\gamma>0$ implied that $H\in \mathcal{S}$. Furthermore, Tang (2008) and Xu et al. (2016) obtain the equivalent conditions for $H\in \mathcal{L}$ and $H\in \mathcal{S}$, respectively.


Meanwhile, since the independence assumption is often too restrictive for practical purposes, many results on the product $XY$ when $X$ and $Y$ follow some dependence structures are obtained. For example, Chen (2011) extended Theorem \ref{1a} to the case that $(X, Y)$ follows an FGM distribution, Under an extra condition, Yang and Sun (2013) extended Theorem \ref{1a} to the case that $(X, Y)$ satisfies Assumption \ref{ass1}, some other results can be found in Yang et al. (2012), Yang and Wang (2013),etc.

Inspired by the above-mentioned research literature, under the relaxed dependence structure Assumption \ref{ass1} and the relaxed condition Assumption \ref{ass2}, we discuss the distribution class of $H$ if  $F\in \mathcal{L}(\gamma)$ and $H\in \mathcal{S}(\gamma)$ for some $\gamma>0$, respectively.

\subsection{Brief reviews on finite-time ruin probabilities in a discrete-time risk model}\label{risk}\


Consider the following discrete-time risk model with dependent financial and
insurance risks: For any $i\geq 1$, the net insurance loss within period $i$,
which is equal to the total claim amount minus the total premium income, is denoted by a real-valued
random variable $X_i$ and the stochastic discount factor from time $i$ to time $i - 1$ is denoted by a nonnegative r.v. $Y_i$. Hence, the stochastic present values of aggregate net losses up to time $n$ of the insurer can be expressed as
 \begin{eqnarray}
S_0=0,~ \ ~S_n=\sum_{i=1}^nX_i\prod_{j=1}^iY_j, ~   n\in \mathds{N}~\label{1}
\end{eqnarray}
 and the finite-time ruin probability by time $n$ is defined by
\begin{eqnarray}
\psi(x;n) =P\Big(\max_{1\leq m\leq n}\sum_{i=1}^m X_i\prod_{j=1}^i Y_j>x\Big),\label{2}
\end{eqnarray}
where $x> 0 $ can be interpreted as the initial capital.

Recently, a vast amount of papers has been published on this model.  In general, we assume that $(X_i,Y_i),\ i\in \mathds{N}$  is a sequence of independent and identically distributed random vectors with a generic random vector $(X,Y)$.
 Tang and Tsitsiashvili (2003, 2004) discussed the case that $X$ and $Y$ are independent and satisfy conditions of Theorem \ref{1a}, Jiang and Tang (2011),  Chen (2011) considered the case that
  ${(X, Y )}$ follows a bivariate FGM distribution, Tang et al. (2013), Yang and Wang (2013) and Yang et al. (2016) etc. considered the case $X$ and $Y$ satisfy Assumption \ref{ass1} and $F\in \mathcal{L}\cap\mathcal{D}$.
  
  Inspired by the above-mentioned research literature, under the relaxed Assumptions \ref{ass1} and \ref{ass2}, we obtain the asymptotic ruin probability of the above risk model in which $F\in\mathcal{S}$.

The rest of this paper consists of three sections. Section 2 presents the main results. Proofs of theorems and corollaries are arranged in Sections 3 and 4.

\section{\bf Main results}
\


In this section, we will present the main results of this paper, their proofs are arranged in sections 3 and 4. The first result is to discuss the product of dependent r.v.s:
\begin{thm}\label{th1}Suppose that $(X,Y)$ satisfies Assumptions \ref{ass1} and \ref{ass2}.\\ 
 \textup{(i)}  If $F\in\mathcal{L}(\gamma)$ for some $\gamma\geq 0$,
 then we have that $H\in \mathcal{L}(\gamma/\beta_G)$;\\
 \textup{(ii)}  If $F\in\mathcal{S}(\gamma)$ for some $\gamma\geq 0$,
 then we have that $H\in \mathcal{S}(\gamma/\beta_G)$,\\
 where $\beta_G$ is the right endpoint of $G$, that is,
   $$\beta_G=\sup\{y:~G(y)<1\}\in (0,\infty],$$
 and when $\beta_G=\infty$, $\gamma/\beta_G$ is understood as 0.
\end{thm}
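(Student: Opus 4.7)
The plan is to derive an integral representation of $\overline H(x)$ via Assumption \ref{ass1}, analyze the ratio $\overline H(x-t)/\overline H(x)$ through a concentration-of-measure argument on a tilted measure of $Y$, and finally extend to the convolution required for part (ii). The Asimit approximation is meaningful only when $x/y\to\infty$, so its uniformity is applied on $\{y\leq bx\}$ for any fixed small $b>0$ to give
\begin{equation*}
\int_0^{bx}P(X>x/y\mid Y=y)\,G(dy)\sim \int_0^{bx}h(y)\overline F(x/y)\,G(dy),
\end{equation*}
while the complementary range is controlled by Assumption \ref{ass2} since $\int_{bx}^\infty P(X>x/y\mid Y=y)\,G(dy)\leq \overline G(bx)=o(\overline H(x))$. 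Hence $\overline H(x)\sim \int_0^{bx}h(y)\overline F(x/y)\,G(dy)$; when $\beta_G<\infty$ the truncation is automatic. A consequence of $\overline G(bx)=o(\overline H(x))$ for all $b>0$ is that $-\log\overline H(x)/x\to 0$ whenever $\beta_G=\infty$, so $\overline H$ is already long-tailed.

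For part (i), introduce the probability measure $\mu_x$ on $(0,\beta_G]$ with $\mu_x(dy)\propto h(y)\overline F(x/y)\,G(dy)$, so that $\overline H(x-t)/\overline H(x)\sim E_{\mu_x}[\overline F((x-t)/Y)/\overline F(x/Y)]$, and by $F\in\mathcal{L}(\gamma)$ the integrand converges pointwise to $e^{\gamma t/Y}$. The task is to identify the concentration of $\mu_x$. When $\gamma>0$ and $\beta_G<\infty$, the consequence $\log\overline F(x)/x\to -\gamma$ of $\mathcal{L}(\gamma)$ yields, for any $\epsilon>0$, the exponential decay
\begin{equation*}
\mu_x([0,\beta_G-\epsilon])\leq \frac{\overline F(x/(\beta_G-\epsilon))}{c_\epsilon\,\overline F(x/(\beta_G-\epsilon/2))}\longrightarrow 0,
\end{equation*}
with $c_\epsilon=\int_{\beta_G-\epsilon/2}^{\beta_G}h(y)\,G(dy)>0$, so $\mu_x$ concentrates at $\beta_G$ and the ratio tends to $e^{\gamma t/\beta_G}$. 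When $\gamma=0$ the integrand is identically one. When $\beta_G=\infty$ with $\gamma>0$, the light-tailedness of $F$ against the long-tailedness of $\overline H$ yields $\overline F(x/M)=o(\overline H(x))$ for each fixed $M$, so $\mu_x$ escapes to infinity and $e^{\gamma t/Y}\to 1$ in $\mu_x$-probability. A uniform bound $\overline F((x-t)/y)\leq C e^{(\gamma+\delta)t/y}\,\overline F(x/y)$ from $\mathcal{L}(\gamma)$ validates the dominated-convergence passage.

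For part (ii), given $H\in\mathcal{L}(\gamma/\beta_G)$ from part (i), the remaining task is to check $\overline{H^{*2}}(x)\sim 2c_H\overline H(x)$ with $c_H=\int e^{(\gamma/\beta_G)z}\,H(dz)$. Writing $Z_1+Z_2=X_1Y_1+X_2Y_2$ with iid Asimit pairs $(X_i,Y_i)$ and conditioning on $(Y_1,Y_2)$, the conditional law of each $X_i$ is asymptotically $h(Y_i)\overline F(\cdot)$, so the conditional tail of the sum is governed by $\overline{F^{*2}}$. Using $F\in\mathcal{S}(\gamma)$ to expand $\overline{F^{*2}}(z)\sim 2c_F\overline F(z)$ and integrating over $(Y_1,Y_2)$ via a two-variable extension of the concentration argument above produces the convolution equivalent with the correct constant.

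The principal obstacle is the case $\beta_G=\infty$ with $\gamma>0$: Assumption \ref{ass2} is the only mechanism forcing the tilted measure's mass to infinity while simultaneously preserving the Asimit approximation on the truncated region, and the truncation level $bx$ must be balanced against both the Asimit uniformity range and the inequality $\overline G(bx)=o(\overline H(x))$. A secondary difficulty in part (ii) is propagating the Asimit uniformity through the double integration defining $\overline{H^{*2}}$ and matching the constant $c_H$ uniformly across the three subcases $\gamma=0$, $(\gamma>0,\beta_G<\infty)$, and $(\gamma>0,\beta_G=\infty)$.
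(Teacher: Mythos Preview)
Your route for part (i) is workable but considerably harder than the paper's. The paper makes a single reduction: set $G_h(dy)=h(y)\,G(dy)$ (a proper distribution since $Eh(Y)=1$), let $Y_h\sim G_h$ be independent of $X$, and observe via Lemma~\ref{pr1} that $\overline H(x)\sim P(XY_h>x)$. Since $h$ is bounded above by some $M$, one has $\overline{G_h}(bx)\leq M\,\overline G(bx)=o(\overline H(x))$ and $\beta_{G_h}=\beta_G$; both parts of the theorem then follow immediately from the \emph{independent} case (Lemma~\ref{lem0}, which collects Tang~2006a/2008, Tang--Tsitsiashvili~2004, Xu et al.~2016) together with the standard fact that tail equivalence preserves membership in $\mathcal{L}(\gamma)$ and $\mathcal{S}(\gamma)$. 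Your $x$-dependent tilted measure $\mu_x$ and concentration argument essentially re-derive the independent-case $\mathcal{L}(\gamma/\beta_G)$ statement from scratch; it can be made to work with care about uniformity near $y=0$ and a genuine dominated-convergence justification, but it buys no generality. (One aside: $-\log\overline H(x)/x\to 0$ does \emph{not} by itself imply $H\in\mathcal{L}$, so that sentence should be removed or flagged as heuristic.)

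Your plan for part (ii) has a real gap. After conditioning on $(Y_1,Y_2)=(y_1,y_2)$ the sum is $X_1y_1+X_2y_2$ with conditionally independent $X_i$, and its tail is \emph{not} governed by $\overline{F^{*2}}$ when $y_1\neq y_2$: the summands lie in different classes $\mathcal{L}(\gamma/y_1)$ and $\mathcal{L}(\gamma/y_2)$, and for $\gamma>0$ the tail of their sum depends on both scalings in a way that does not reduce to $2c_F\overline F$. The ``two-variable extension of the concentration argument'' you invoke would have to supply a uniform asymptotic for $P(X_1y_1+X_2y_2>x)$ over all $(y_1,y_2)$, integrate against the product tilted measure, and identify the resulting constant as $2c_H$ across all three subcases --- this is substantial and not a routine extension of part (i). The paper's tilting trick sidesteps the difficulty entirely: once $\overline H(x)\sim P(XY_h>x)$ with $X$ and $Y_h$ independent, the $\mathcal{S}(\gamma/\beta_G)$ conclusion is inherited directly from the independent product results, and no analysis of $H^{*2}$ by hand is needed.
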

\begin{rem}Clearly, when $\beta_G<\infty$, Assumption \ref{ass2} is automatically satisfied.\end{rem}
From Theorem \ref{th1}, we immediately obtain the following two corollaries:
\begin{cor}\label{co21} Suppose that $(X,Y)$ satisfies Assumption \ref{ass2} and follows a FGM distribution (\ref{fgm}). 
Suppose that $|\theta|<1$.\\
  \textup{(i)}  If $F\in\mathcal{L}(\gamma)$ for some $\gamma\geq 0$,
 then we have that $H\in \mathcal{L}(\gamma/\beta_G)$;\\
 \textup{(ii)}  If $F\in\mathcal{S}(\gamma)$ for some $\gamma\geq 0$,
 then we have that $H\in \mathcal{S}(\gamma/\beta_G)$.
 \end{cor}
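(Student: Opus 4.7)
The plan is to deduce Corollary \ref{co21} directly from Theorem \ref{th1}: since Assumption \ref{ass2} is already in the hypothesis, it only remains to verify that an FGM pair $(X,Y)$ with $|\theta|<1$ satisfies Assumption \ref{ass1} as well. Once this is done, parts (i) and (ii) are nothing but the two corresponding assertions of Theorem \ref{th1}.

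First, I would compute the conditional tail $P(X>x\mid Y=y)$ using the Sarmanov representation of the FGM distribution with the particular kernels $\phi_1(x)=1-2F(x)$ and $\phi_2(y)=1-2G(y)$ noted in Section 1.2. Since $\int_x^\infty\phi_1(u)F(du)=\overline{F}(x)-(1-F^2(x))=-\overline{F}(x)F(x)$, integrating the Sarmanov density over the first coordinate yields the clean closed form
\begin{eqnarray*}
P(X>x\mid Y=y) = \overline{F}(x)\bigl(1+\theta(2G(y)-1)F(x)\bigr),
\end{eqnarray*}
valid for every $y\in\mathfrak{D}_Y$; for $y\notin\mathfrak{D}_Y$ the convention introduced after Assumption \ref{ass1} applies. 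Using the Sarmanov representation (rather than differentiating the joint survival function in $y$) has the virtue of not requiring $G$ to possess a density, which is the only mildly subtle point in the argument.

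Next, I would set $h(y)=1+\theta(2G(y)-1)$ and check the two requirements of Assumption \ref{ass1}: positivity and uniform convergence. Positivity follows at once from $|\theta|<1$ and $2G(y)-1\in[-1,1]$, giving $h(y)\ge 1-|\theta|>0$. For the uniformity, a direct manipulation shows
\begin{eqnarray*}
\frac{P(X>x\mid Y=y)}{h(y)\overline{F}(x)}-1 = -\frac{\theta(2G(y)-1)\overline{F}(x)}{h(y)},
\end{eqnarray*}
whose modulus is bounded above, uniformly in $y\ge 0$, by $|\theta|\overline{F}(x)/(1-|\theta|)$, a quantity that vanishes as $x\to\infty$. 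Thus Assumption \ref{ass1} holds with this explicit $h$, and both conclusions of Corollary \ref{co21} follow as immediate restatements of Theorem \ref{th1}. No substantial obstacle is anticipated; the strict inequality $|\theta|<1$ is used precisely to keep $h$ bounded away from zero, which in turn underlies both the positivity of $h$ and the uniformity of the limit.
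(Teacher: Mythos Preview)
Your overall strategy---verify Assumption~\ref{ass1} and then invoke Theorem~\ref{th1}---is exactly the paper's. However, the step where you compute $P(X>x\mid Y=y)$ has a genuine gap precisely in the case you flag as the ``mildly subtle point,'' namely when $G$ has atoms. The Sarmanov density with kernel $\phi_2(y)=1-2G(y)$ coincides with the FGM law (\ref{fgm}) only when $G$ is continuous; at an atom of $G$ the two joint laws differ, so you cannot read off the conditional tail from that density. Computing directly from the defining survival function (\ref{fgm}) via $P(X>x\mid Y=y)=\lim_{\delta\downarrow 0}P(X>x\mid Y\in(y-\delta,y+\delta])$ yields
\[
P(X>x\mid Y=y)=\overline{F}(x)\bigl(1+\theta F(x)(G(y)+G(y-)-1)\bigr),
\]
so the correct choice is $h(y)=1+\theta(G(y)+G(y-)-1)$, which is the paper's (\ref{fgmh}) (and, within the Sarmanov framework, precisely the averaged kernel $\psi(y)$ of Remark~\ref{rem2}). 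With your $h(y)=1+\theta(2G(y)-1)$, at an atom $y_0$ the ratio $P(X>x\mid Y=y_0)/\bigl(h(y_0)\overline{F}(x)\bigr)$ tends to
\[
\frac{1+\theta(G(y_0)+G(y_0-)-1)}{1+\theta(2G(y_0)-1)}\neq 1,
\]
so Assumption~\ref{ass1} is \emph{not} verified. Once you replace $2G(y)-1$ by $G(y)+G(y-)-1$, your positivity bound $h(y)\ge 1-|\theta|>0$ and your uniformity estimate go through verbatim, and the proof is complete exactly as in the paper.
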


\begin{cor}\label{co22} Assume that $(X,Y)$ satisfies Assumption \ref{ass2} and follows a Sarmonov distribution (\ref{sam})-(\ref{sam2}). 
Suppose that there is a measurable function $\psi(\cdot)$ such that
\begin{eqnarray}
\lim_{\delta\to 0+}\frac{\int_{y-\delta}^{y+\delta}\phi_2(v)G(dv)}{G(y+\delta)-G(y-\delta)}=\psi(y)\label{corb20}
\end{eqnarray}
holds for all $y\in \mathfrak{D}_Y$.
Furthermore, assume that there are constants $c$ and $d_1$ such that
\begin{eqnarray}\label{cor20}\lim_{x\rightarrow\infty}\phi_1(x)=d_1\in(0,\infty)\end{eqnarray}
and
\begin{equation}\label{cor21}P(1+\theta d_1\psi(Y)\geq c)=1.\end{equation}
\textup{(i)}  If $F\in\mathcal{L}(\gamma)$ for some $\gamma\geq 0$,
 then we have that $H\in \mathcal{L}(\gamma/\beta_G)$;\\
 \textup{(ii)}  If $F\in\mathcal{S}(\gamma)$ for some $\gamma\geq 0$,
 then we have that $H\in \mathcal{S}(\gamma/\beta_G)$.
\end{cor}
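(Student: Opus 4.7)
The plan is to reduce Corollary \ref{co22} to Theorem \ref{th1} by showing that, under the Sarmonov structure (\ref{sam})--(\ref{sam2}) together with (\ref{corb20})--(\ref{cor21}), Assumption \ref{ass1} automatically holds. Since Assumption \ref{ass2} is already in the hypothesis, parts (i) and (ii) of Theorem \ref{th1} would then yield the two conclusions without further work.

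I would derive Assumption \ref{ass1} in two computational steps. First, from the joint measure (\ref{sam}) and the zero-mean constraint $E\phi_1(X)=0$ in (\ref{sam1}), a direct calculation gives, for any Borel $B\subset[0,\infty)$,
\beq
P(X>x,\,Y\in B) \;=\; \overline{F}(x)\,P(Y\in B) \;+\; \theta\,\Bigl(\int_x^\infty \phi_1(u)\,F(du)\Bigr)\int_B \phi_2(v)\,G(dv).
\eeq
Taking $B=(y-\delta,y+\delta)$ with $y\in\mathfrak{D}_Y$, dividing by $P(Y\in B)$, and letting $\delta\downarrow 0$ in view of (\ref{corb20}) produces the pointwise formula
\beq
P(X>x\mid Y=y) \;=\; \overline{F}(x) \;+\; \theta\,\psi(y)\,\int_x^\infty \phi_1(u)\,F(du).
\eeq
Second, hypothesis (\ref{cor20}) together with a standard $\varepsilon$-argument yields $r(x):=\int_x^\infty \phi_1(u)\,F(du)/\overline F(x)\to d_1$. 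Setting $h(y):=1+\theta d_1\psi(y)$ for $y\in\mathfrak{D}_Y$ and $h(y):=1$ otherwise, I then obtain the identity
\beq
\frac{P(X>x\mid Y=y)}{h(y)\,\overline F(x)}-1 \;=\; \frac{h(y)-1}{d_1\,h(y)}\bigl(r(x)-d_1\bigr), \qquad y\in\mathfrak{D}_Y.
\eeq

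The main obstacle will be upgrading this pointwise asymptotic to the uniformity in $y\geq 0$ required by Assumption \ref{ass1}, and this is exactly where (\ref{cor21}) enters. Since $P(h(Y)\geq c)=1$, after modifying $\psi$ on a $P_Y$-null set I may assume $h(y)\geq \min\{1,c\}$ for every $y\geq 0$; the coefficient $(h(y)-1)/(d_1\,h(y))$ is then bounded by a constant independent of $y$, and the identity above yields
\beq
\sup_{y\geq 0}\Bigl|\frac{P(X>x\mid Y=y)}{h(y)\,\overline F(x)}-1\Bigr| \;=\; O\bigl(|r(x)-d_1|\bigr) \longrightarrow 0.
\eeq
This verifies Assumption \ref{ass1}, so Theorem \ref{th1} applies and delivers both (i) and (ii). Without (\ref{cor21}), $\psi(y)$ could push $h(y)$ arbitrarily close to zero and the uniform bound would fail, so this hypothesis is genuinely needed rather than merely convenient.
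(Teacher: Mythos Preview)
Your proposal is correct and follows essentially the same route as the paper: verify Assumption~\ref{ass1} with $h(y)=1+\theta d_1\psi(y)$ via the conditional-probability computation for the Sarmonov structure, then invoke Theorem~\ref{th1}. Your treatment is in fact more explicit than the paper's on the uniformity step---the identity $(h(y)-1)/(d_1 h(y))\cdot(r(x)-d_1)$ together with the lower bound $h\geq c$ from (\ref{cor21}) makes transparent why the convergence is uniform, whereas the paper simply asserts that (\ref{cor20}) and (\ref{cor21}) yield the uniform asymptotic.
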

\begin{rem} If $y$ is a isolated point of set $\mathfrak{D}_Y$ or $y$ is a continuous point in $\mathfrak{D}_Y$ of function $\phi_2(\cdot)$, it is obvious that $\psi(y)=\phi_2(y).$\end{rem}

Now we state the second main results in which the conditions are weaker than that in Tang et al. (2013), Yang and Wang (2013) and Yang et al. (2016) etc.

\begin{thm}\label{thm2} In the risk model introduced in Subsection \ref{risk}, let $(X_i,Y_i),\ i=1,2,\cdots$  be a sequence of independent and identically distributed random vectors with a generic random vector $(X,Y)$ which satisfies Assumptions \ref{ass1} and \ref{ass2}.
 If $F\in S$, then we have that
\begin{eqnarray}
\Psi(x,n)=P(\max_{1\leq m\leq n}\sum_{i=1}^mX_i\prod_{j=1}^iY_j>x)\sim\sum_{i=1}^n\overline H_i(x)\label{11}
\end{eqnarray}
holds for all $n=1,2,\cdots$, where $H_i(x)=P(X_i\prod_{j=1}^iY_j\leq x)$ for $2\leq i\leq n$ and $H_1=H$.\\
\end{thm}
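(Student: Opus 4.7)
I would prove this by induction on $n$, exploiting the recursive identity
\begin{eqnarray*}
M_n := \max_{1 \leq m \leq n} S_m = X_1 Y_1 + Y_1 \tilde{M}_{n-1},
\end{eqnarray*}
where $\tilde{M}_{n-1} := \max(0, \tilde{S}_1, \ldots, \tilde{S}_{n-1})$ with $\tilde{S}_k := \sum_{i=1}^k X_{i+1} \prod_{j=1}^i Y_{j+1}$. The nonnegativity of $Y_1$ lets one factor it out of the maximum, and $\tilde{M}_{n-1}$ is independent of $(X_1, Y_1)$ with the same distribution as $\max(0, S_1, \ldots, S_{n-1})$. The base case $n = 1$ gives $\Psi(x,1) = \overline{H_1}(x)$ directly, and $H_1 = H \in \mathcal{S}$ by Theorem~\ref{th1}(ii) applied with $\gamma = 0$.

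As a preliminary step, I would verify that $H_i \in \mathcal{S}$ for every $1 \leq i \leq n$. For $i = 1$ this is Theorem~\ref{th1}(ii). For $i \geq 2$, write $X_i \prod_{j=1}^i Y_j = (X_i Y_i) V_i$ with $V_i := \prod_{j=1}^{i-1} Y_j \geq 0$ independent of $X_i Y_i \sim H \in \mathcal{S}$. Assumption~\ref{ass2} applied to the factors of $V_i$ yields $\overline{V_i}(bx) = o(\overline{H}(x))$ for all $b > 0$, so the classical independent-product closure for $\mathcal{S}$ (Theorem~\ref{1a}) gives $H_i \in \mathcal{S}$. In particular, the finite sum $\sum_{i=1}^n \overline{H_i}(x)$ represents a subexponential tail, and by the induction hypothesis the law of $\tilde{M}_{n-1}$ itself lies in $\mathcal{S}$.

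For the inductive step I condition on $Y_1 = y$. Since $\tilde{M}_{n-1}$ is independent of $(X_1, Y_1)$ and Assumption~\ref{ass1} gives $P(X_1 > t \mid Y_1 = y) \sim h(y)\overline{F}(t)$ uniformly in $y$, a subexponential convolution argument yields
\begin{eqnarray*}
P(M_n > x \mid Y_1 = y) &=& P(X_1 + \tilde{M}_{n-1} > x/y \mid Y_1 = y) \\
&\sim& h(y)\overline{F}(x/y) + \sum_{i=1}^{n-1}\overline{H_i}(x/y).
\end{eqnarray*}
Integrating against $G(dy)$, with Assumption~\ref{ass2} controlling the contribution from large $y$, I obtain
\begin{eqnarray*}
P(M_n > x) \sim \int_0^\infty h(y)\overline{F}(x/y)\, G(dy) + \sum_{i=1}^{n-1} \int_0^\infty \overline{H_i}(x/y)\, G(dy).
\end{eqnarray*}
The first integral equals $\overline{H_1}(x)$ by the definition of $H$ together with Assumption~\ref{ass1}. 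For each term in the sum, $\int_0^\infty \overline{H_i}(x/y)\, G(dy) = P(Y_1' Z_i > x)$ with $Z_i \sim H_i$ and $Y_1' \sim G$ independent of $Z_i$; the distributional identity $Y_1' Z_i \stackrel{d}{=} X_{i+1} \prod_{j=1}^{i+1} Y_j$ (the independent $Y_1'$ serves as one more independent discount factor, while the Asimit-dependent pair $(X_{i+1}, Y_{i+1})$ contributes $X_{i+1} Y_{i+1} \sim H$) shows this equals $\overline{H_{i+1}}(x)$. Reindexing and adding $\overline{H_1}(x)$ produces $\sum_{i=1}^n \overline{H_i}(x)$, completing the induction.

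The principal obstacle is making the interchange of asymptotic and integration over $y$ rigorous. The uniformity in Assumption~\ref{ass1} and the preliminary closure $H_i \in \mathcal{S}$ handle the integrand pointwise, but one must truncate $y$ at some slowly-growing fraction of $x$ and verify that the contribution from larger $y$ is $o(\overline{H_1}(x))$; this is exactly where Assumption~\ref{ass2} enters. A related subtlety is the conditional subexponential convolution step, for which both $F$ and the law of $\tilde{M}_{n-1}$ must lie in $\mathcal{S}$, motivating the preliminary closure lemma for the $H_i$'s.
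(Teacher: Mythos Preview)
Your overall strategy is a close variant of the paper's: both argue by induction and condition on one discount factor to reduce to a subexponential convolution. The paper uses the time-reversed recursion $T_{n+1}=(X_{n+1}+T_n)Y_{n+1}$ (noting $S_n\stackrel{\textup d}{=}T_n$ and sandwiching $\Psi(x,n)$ between $P(S_n>x)$ and $P(\sum X_i^+\prod Y_j>x)$), whereas you work directly with $M_n=Y_1(X_1+\tilde M_{n-1})$. These are dual recursions, and the analytical core---the conditional convolution estimate uniform in $y$, with truncation of large $y$ via Assumption~\ref{ass2}---is identical. Your forward recursion has the mild advantage of handling the running maximum without a sandwich.

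There is, however, a genuine gap in your preliminary step. The assertion ``Assumption~\ref{ass2} applied to the factors of $V_i$ yields $\overline{V_i}(bx)=o(\overline H(x))$ for all $b>0$'' does not follow: the tail of the product $\prod_{j=1}^{i-1}Y_j$ can be much heavier than that of a single $Y_j$, and Assumption~\ref{ass2} only controls $\overline G(bx)$, not $\overline G(cx^{1/(i-1)})$, against $\overline H(x)$. Moreover, Theorem~\ref{1a} is not the right tool (its conditions (a)--(d) are more than Assumption~\ref{ass2}); what you need is Lemma~\ref{lem0}. The fix is to establish $H_i\in\mathcal S$ inductively rather than all at once, exactly as the paper does: carry along both $H_n\in\mathcal S$ and $\overline G(bx)=o(\overline{H_n}(x))$ for all $b>0$ as induction hypotheses, write $H_{n+1}$ as the law of $Z_n Y'$ with $Z_n\sim H_n$ and $Y'\sim G$ independent, apply Lemma~\ref{lem0} to get $H_{n+1}\in\mathcal S$, and invoke Corollary~1.1 of Tang~(2008) to propagate $\overline G(bx)=o(\overline{H_{n+1}}(x))$. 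With this correction your argument goes through.
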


\begin{cor}\label{co41} In the risk model introduced in Subsection \ref{risk}, let $(X_i,Y_i),\ i=1,2,\cdots$  be a sequence of independent and identically distributed random vectors with a generic random vector $(X,Y)$. Suppose that $(X,Y)$ satisfies Assumption \ref{ass2} and follows an FGM distribution (\ref{fgm}) in which $|\theta|<1$. If $F\in S$, then we have that (\ref{11}) holds for all $n=1,2,\cdots$.
\end{cor}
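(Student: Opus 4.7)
The plan is to derive Corollary \ref{co41} as an immediate consequence of Theorem \ref{thm2}. Since Assumption \ref{ass2} is assumed directly in the hypotheses of the corollary, the only thing to verify is that the bivariate FGM distribution (\ref{fgm}) with $|\theta|<1$ satisfies the Asimit dependence structure in Assumption \ref{ass1}.

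First, I would compute the conditional tail $P(X>x\mid Y=y)$ from the joint tail formula (\ref{fgm}). Writing
$P(X>x,\,y<Y\leq y+\delta)=P(X>x,Y>y)-P(X>x,Y>y+\delta)$
and expanding each term using (\ref{fgm}), the elementary identity $(G(y)\overline G(y)-G(y+\delta)\overline G(y+\delta))/(G(y+\delta)-G(y))=G(y)+G(y+\delta)-1$ lets one divide by $P(y<Y\leq y+\delta)=G(y+\delta)-G(y)$ and pass to the limit $\delta\to 0+$ without assuming that $G$ possesses a density. For each $y\in\mathfrak{D}_Y$ this yields
$P(X>x\mid Y=y)=\overline F(x)\bigl[\,1+\theta F(x)(2G(y)-1)\,\bigr]$
(with the obvious one-sided analogue at atoms of $G$). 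Since $F(x)\to 1$ as $x\to\infty$, this motivates the choice $h(y)=1+\theta(2G(y)-1)$.

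Finally, I would verify the two requirements of Assumption \ref{ass1}. Positivity is where $|\theta|<1$ is crucial: because $|2G(y)-1|\leq 1$, we get $h(y)\geq 1-|\theta|>0$ uniformly in $y$. For uniform convergence, setting $a(y)=\theta(2G(y)-1)$,
$\left|\dfrac{P(X>x\mid Y=y)}{h(y)\,\overline F(x)}-1\right|=\dfrac{|a(y)|\,\overline F(x)}{|1+a(y)|}\leq \dfrac{|\theta|}{1-|\theta|}\,\overline F(x)\to 0,$
and the bound is independent of $y\geq 0$. With Assumption \ref{ass1} verified, Theorem \ref{thm2} applies directly and gives (\ref{11}). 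The only mildly nontrivial step is the identification of $h(\cdot)$ via the limit above; once $h$ has been written down, both positivity (the only place $|\theta|<1$ enters) and uniformity are routine.
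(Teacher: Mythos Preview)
Your approach is essentially the paper's: verify that the FGM structure with $|\theta|<1$ satisfies Assumption~\ref{ass1} and then invoke Theorem~\ref{thm2} (the paper does this by pointing back to the proof of Corollary~\ref{co31}). One small correction: your one-sided limit through $(y,y+\delta]$ misses atoms of $G$, and the resulting $h(y)=1+\theta(2G(y)-1)$ is only right at continuity points; using the two-sided interval $(y-\delta,y+\delta]$ as in the paper gives $h(y)=1+\theta(G(y)+G(y-)-1)$, which is the correct version of~(\ref{fgmh}) --- but positivity and uniformity go through identically, so the conclusion is unaffected.
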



\begin{cor}\label{co42}In the risk model introduced in Subsection \ref{risk}, let $(X_i,Y_i),\ i=1,2,\cdots$  be a sequence of independent and identically distributed random vectors with a generic random vector $(X,Y)$. Assume that $(X,Y)$ satisfies Assumption \ref{ass2} and follows a Sarmonov distribution (\ref{sam})-(\ref{sam2}).
 Furthermore, assume that
 there exist positive constants $c$ and $d_1$ and a function $\psi(\cdot)$ such that (\ref{corb20})-(\ref{cor21}) hold.
 If $F\in S$, then we have that (\ref{11}) holds for all $n=1,2,\cdots$.
\end{cor}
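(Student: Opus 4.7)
The plan is to reduce Corollary \ref{co42} to Theorem \ref{thm2} by verifying that the Sarmanov structure, together with hypotheses (\ref{corb20})--(\ref{cor21}), forces the Asimit relation (\ref{eq201}) to hold uniformly in $y\ge 0$. Assumption \ref{ass2} is already among the hypotheses, and the vectors $(X_i,Y_i)$ are i.i.d.\ copies of $(X,Y)$, so once Assumption \ref{ass1} is established the conclusion (\ref{11}) follows directly from Theorem \ref{thm2}.

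For the first step I would disintegrate the Sarmanov joint law against $G$. For any $y\in\mathfrak{D}_Y$,
\begin{eqnarray*}
P(X>x\mid Y=y)
&=&\lim_{\delta\to 0+}\frac{\int_x^\infty\int_{y-\delta}^{y+\delta}(1+\theta\phi_1(u)\phi_2(v))\,F(du)\,G(dv)}{G(y+\delta)-G(y-\delta)}\\
&=&\overline F(x)+\theta\psi(y)\int_x^\infty\phi_1(u)\,F(du),
\end{eqnarray*}
the inner limit being $\psi(y)$ by hypothesis (\ref{corb20}). Condition (\ref{cor20}), namely $\phi_1(u)\to d_1$, gives
$$\int_x^\infty\phi_1(u)\,F(du)=d_1\overline F(x)+\varepsilon(x),\qquad\varepsilon(x)=o(\overline F(x)),$$
via the standard split at a level beyond which $|\phi_1-d_1|<\eta$. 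Setting $h(y):=1+\theta d_1\psi(y)$ then yields
$$P(X>x\mid Y=y)=h(y)\overline F(x)+\theta\psi(y)\varepsilon(x),$$
exhibiting $h$ as the candidate function for Assumption \ref{ass1}.

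The main obstacle is the uniformity, which amounts to showing
$$\sup_{y\ge 0}\Big|\frac{P(X>x\mid Y=y)}{h(y)\overline F(x)}-1\Big|=\frac{|\varepsilon(x)|}{\overline F(x)}\,\sup_{y\ge 0}\frac{|\theta\psi(y)|}{h(y)}\longrightarrow 0.$$
Because the prefactor vanishes, the whole problem reduces to bounding $\theta\psi(y)/h(y)$ uniformly in $y$. This is where condition (\ref{cor21}) is essential: writing $u=\theta d_1\psi(y)$, it forces $u\ge c-1>-1$ for $y$ in the essential support of $G$, and on $[c-1,\infty)$ the map $u\mapsto u/(1+u)$ is increasing and bounded, whence $|\theta\psi(y)/h(y)|\le M/d_1$ for some finite $M=M(c)$. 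For $y\notin\mathfrak{D}_Y$ the convention $h(y)=1$ from Assumption \ref{ass1} makes the bound trivial. This verifies Assumption \ref{ass1} with $h(Y)\ge c$ almost surely, and Theorem \ref{thm2} then delivers (\ref{11}) for every $n\ge 1$.
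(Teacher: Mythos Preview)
Your proposal is correct and follows essentially the same route as the paper: the paper simply cites the proof of Corollary~\ref{co32} to obtain that $(X,Y)$ satisfies Assumption~\ref{ass1} with $h(y)=1+\theta d_1\psi(y)$, and then invokes Theorem~\ref{thm2}. Your write-up supplies more detail than the paper does on the uniformity step---in particular the boundedness of $u\mapsto u/(1+u)$ on $[c-1,\infty)$ that makes (\ref{cor21}) do the work---but the strategy is identical.
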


\section{Proofs of Theorem \ref{th1} and its corollaries}
 Before giving the proof of Theorem \ref{th1}, we first provide two lemmas and their corollaries, which will play a very important role in the proof of Theorem \ref{th1} and their corollaries, and has its own significant value.
\begin{lemma}\label{pr1} Suppose that $(X,Y)$ satisfies Assumptions \ref{ass1} and \ref{ass2}.  Then we have
\begin{eqnarray}
P(XY>x)\sim\int_0^\infty h(y)\overline F\Big(\frac{x}{y}\Big)G(dy)\label{pr11}.
\end{eqnarray}
\end{lemma}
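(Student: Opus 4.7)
I start from $\overline H(x)=\int_{(0,\infty)}P(X>x/y\mid Y=y)\,G(dy)$ and split the $y$-integration at $y=x/T$ for a large auxiliary constant $T$. On the main range $(0,x/T]$ the argument $x/y$ is at least $T$, so the uniformity in Assumption \ref{ass1} applied at the argument $x/y$ will let me replace $P(X>x/y\mid Y=y)$ by $h(y)\overline F(x/y)$ up to a factor $1\pm\delta$. On the tail range $(x/T,\infty)$ the conditional probability is trivially $\le 1$, so the contribution is dominated by $\overline G(x/T)$, which is $o(\overline H(x))$ by Assumption \ref{ass2}. A parallel split is performed on the target integral $I(x):=\int_0^\infty h(y)\overline F(x/y)\,G(dy)$.

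\textbf{Preliminary observation: $h$ is bounded.} A crucial but unstated consequence of the uniformity in Assumption \ref{ass1} is boundedness of $h$. Indeed, taking tolerance $\delta=1/2$ in the uniformity clause gives some $T_0$ such that $P(X>z\mid Y=y)\ge \tfrac12 h(y)\overline F(z)$ for every $z\ge T_0$ and every $y\ge 0$; since the left-hand side is at most $1$, we obtain $h(y)\le 2/\overline F(T_0)=:K<\infty$ uniformly in $y\ge 0$. I would record this up front, because it is precisely what makes the tail contribution to $I(x)$ controllable at the end of the argument.

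\textbf{Carrying out the argument.} Fix $\delta>0$ and choose $T\ge T_0$ so that $(1-\delta)h(y)\overline F(z)\le P(X>z\mid Y=y)\le (1+\delta)h(y)\overline F(z)$ for all $z\ge T$ and $y\ge 0$. Put $J(x):=\int_{(0,x/T]}h(y)\overline F(x/y)\,G(dy)$. The main-range uniform estimate together with the tail bound $\overline G(x/T)=o(\overline H(x))$ combine to give $(1-\delta)J(x)\le \overline H(x)\le (1+\delta)J(x)+o(\overline H(x))$, from which $J(x)\sim \overline H(x)$ upon dividing by $\overline H(x)$ and sending $\delta\downarrow 0$. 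For the comparison $I(x)\sim J(x)$, I would bound $I(x)-J(x)=\int_{(x/T,\infty)}h(y)\overline F(x/y)\,G(dy)\le K\,\overline G(x/T)=o(\overline H(x))=o(J(x))$, where boundedness of $h$ is used decisively. Combining the two equivalences gives \eqref{pr11}.

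\textbf{Main obstacle.} The only genuinely subtle step is showing that the excess $I(x)-J(x)$ is $o(\overline H(x))$: the quick bound $\overline F(x/y)\le 1$ suffices only after $h$ is pulled out as a constant, and boundedness of $h$ is not among the stated hypotheses — it has to be deduced from the uniformity clause of Assumption \ref{ass1} itself, as in the preliminary observation above. Once that observation is made, the rest is a routine two-region split controlled by Assumption \ref{ass2}.
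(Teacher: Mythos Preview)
Your argument is correct and follows the same outline as the paper's proof: split the $y$-integration into a main range and a tail range, use the uniformity of Assumption~\ref{ass1} on the main range, control the tail by Assumption~\ref{ass2}, and deduce boundedness of $h$ from the uniformity clause exactly as you do in your preliminary observation (the paper records the same bound $h(y)\le M$). The one implementation difference is the choice of threshold: the paper invokes Lemma~3.2 of Tang~(2006) to obtain a single function $b(x)\uparrow\infty$ with $b(x)/x\downarrow 0$ and $\overline G(b(x))=o(\overline H(x))$, and splits at $y=b(x)$, which makes the argument a one-pass estimate with no $\delta\to 0$ at the end; you instead split at the fixed ratio $y=x/T$ and then let $\delta\downarrow 0$ (hence $T\uparrow\infty$). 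Your version is slightly more elementary because it does not rely on the external lemma, while the paper's version is cleaner because the intermediate quantity $J(x)$ in your write-up depends on $T=T(\delta)$, so the line ``$J(x)\sim\overline H(x)$'' should really be phrased as the pair of one-sided bounds $1/(1+\delta)\le \liminf I(x)/\overline H(x)$ and $\limsup I(x)/\overline H(x)\le 1/(1-\delta)$ before sending $\delta\downarrow 0$.
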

\begin{proof}
According to Lemma 3.2 of Tang (2006), there is a function $b(\cdot):[0, \infty) \rightarrow(0, \infty)$ such that
 $ b(x)\uparrow\infty,
 \frac{b(x)}{x}\downarrow0$ and $\overline G(b(x))=o(\overline H(x))$.
Hence, we have
\begin{eqnarray}
\overline H(x)&=&\int_0^{b(x)}P(X>\frac{x}{y}|Y=y)G(dy)+O(\overline G(b(x)))\nonumber\\
&=&(1+o(1))\int_0^{b(x)}h(y)\overline F\Big(\frac{x}{y}\Big)G(dy)+o(\overline H(x))\nonumber\\
&=&(1+o(1))\Big(\int_0^\infty-\int_{b(x)}^{\infty}\Big)h(y)\overline F\Big(\frac{x}{y}\Big)G(dy)+o(\overline H(x))\label{pr13}.
\end{eqnarray}
From Assumption \ref{ass1}, there exists a constant $x_0>0$ such that $$P(X>x|Y=y)>\frac{1}{2}h(y)P(X>x)$$ holds for all $x>x_0$ and $y\in \mathfrak{D}_Y$, which implies that $h(y)$ is bounded above in $y\in \mathfrak{D}_Y$, that is, there is a positive constant $M$ such that $h(y)\leq M$ holds for all $y\in \mathfrak{D}_Y$. Hence, it follows that
\begin{eqnarray*}
\int_{b(x)}^\infty h(y)\overline F\Big(\frac{x}{y}\Big)G(dy)=O(G(b(x)))=o(\overline H(x))\label{pr14}.
\end{eqnarray*}
Combining with (\ref{pr13}) yields (\ref{pr11}). 
 This ends the proof of Lemma  \ref{pr1}.
\end{proof}
\begin{rem} 
The analysis in relation (\ref{pr11}) shows that the dependence structure of $X$ and $Y$ can be dissolved and
its impact on the tail behavior of quantities under consideration can be captured. More details can be found in the following two corollaries.
\end{rem}


\begin{cor}\label{co31} Suppose that $(X,Y)$ satisfies Assumption \ref{ass2} and follows an FGM distribution (\ref{fgm}). If  $|\theta|<1$, then we have
\begin{eqnarray}
\overline H(x)\sim\int_0^\infty(1+\theta(G(y)+G(y-)-1))\overline F\Big(\frac{x}{y}\Big)G(dy),\label{cor313}
\end{eqnarray}
where $G(y-)=\lim_{\delta\to 0+}G(y-\delta).$
\end{cor}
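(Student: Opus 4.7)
The strategy is to show that an FGM pair with $|\theta|<1$ fits the framework of Lemma \ref{pr1} with an explicitly identified $h(\cdot)$, after which the conclusion is just a substitution.

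First I would derive the conditional tail. Starting from (\ref{fgm}), a short computation gives the joint distribution function $P(X\leq x,Y\leq y)=F(x)G(y)(1+\theta \overline F(x)\overline G(y))$. Taking the difference at $y$ to extract the atom (or using the conditional density at continuity points of $G$) yields
\begin{eqnarray*}
P(X>x\mid Y=y)=\overline F(x)\bigl[1+\theta F(x)(G(y)+G(y-)-1)\bigr]
\end{eqnarray*}
for every $y\in\mathfrak{D}_Y$. Letting $x\to\infty$ identifies the candidate
\begin{eqnarray*}
h(y)=1+\theta(G(y)+G(y-)-1),
\end{eqnarray*}
which is exactly the weight appearing on the right-hand side of (\ref{cor313}).

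Next I would verify that the convergence in Assumption \ref{ass1} holds uniformly for $y\geq 0$, which is required before Lemma \ref{pr1} can be applied. Direct algebra gives
\begin{eqnarray*}
\frac{P(X>x\mid Y=y)}{h(y)\overline F(x)}-1=\frac{-\theta\overline F(x)(G(y)+G(y-)-1)}{1+\theta(G(y)+G(y-)-1)}.
\end{eqnarray*}
Since $|G(y)+G(y-)-1|\leq 1$ and $|\theta|<1$, the denominator is bounded below by $1-|\theta|>0$, so the absolute value of the right-hand side is at most $|\theta|\overline F(x)/(1-|\theta|)$, which tends to zero uniformly in $y$. In particular $h(\cdot)$ is positive and bounded, so all hypotheses of Lemma \ref{pr1} are in force.

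Finally, I would invoke Lemma \ref{pr1} together with the explicit form of $h(y)$ to conclude (\ref{cor313}). The main delicate point is the bookkeeping at atoms of $G$, where the factor $G(y)+G(y-)-1$ (rather than $2G(y)-1$) is dictated by the correct definition of conditional probability; at continuity points of $G$ the two coincide. The assumption $|\theta|<1$ plays no role beyond keeping $h(y)$ bounded away from zero, which is precisely what makes the uniform-convergence estimate trivial.
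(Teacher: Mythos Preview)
Your proposal is correct and follows essentially the same route as the paper: compute the conditional tail $P(X>x\mid Y=y)=\overline F(x)[1+\theta F(x)(G(y)+G(y-)-1)]$, identify $h(y)=1+\theta(G(y)+G(y-)-1)$, check the uniform convergence in Assumption~\ref{ass1} via the ratio identity (your explicit bound $|\theta|\overline F(x)/(1-|\theta|)$ is exactly what the paper's relation (\ref{fgm5}) encodes), and then apply Lemma~\ref{pr1}. The only cosmetic difference is that you spell out the uniform estimate and the atom bookkeeping more explicitly than the paper does.
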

\begin{proof}  For any $y\in \mathfrak{D}_Y$, simple calculations yield that
\begin{eqnarray*}
P(X>x|Y=y)&=&\lim_{\delta\to 0+}P(X>x|Y\in (y-\delta,y+\delta])\nonumber\\
&=&  \overline{F}(x)(1+\theta F(x)(G(y)+G(y-)-1)). \label{fgm1}
\end{eqnarray*}
It follows that
\begin{eqnarray}
\frac{P(X>x|Y=y)}{\overline{F}(x)(1+\theta (G(y)+G(y-)-1))}-1=\frac{- \theta \overline{F}(x)(G(y)+G(y-)-1)}{ 1+\theta  (G(y)+G(y-)-1)} . \label{fgm5}
\end{eqnarray}
Clearly, when $|\theta|<1$, we have that
\begin{eqnarray}
P(X>x|Y=y)\sim \overline{F}(x)(1+\theta (G(y)+G(y-)-1)) \label{7}
\end{eqnarray}
holds uniformly in $y\in \mathfrak{D}_Y$.
Hence, relation (\ref{cor313}) follows from Lemma  \ref{pr1} immediately by letting
\begin{equation}\label{fgmh}h(y)=1+\theta (G(y)+G(y-)-1),\ ~y\in \mathfrak{D}_Y.\end{equation}
\end{proof}
\begin{rem}Let $\alpha_G$ denote the left endpoint of $G$, i.e. $\alpha_G=\inf\{y:G(y)>0\}$.

\textup{(i)} If $\theta=1$ and $P(Y=\alpha_G)>0$, from (\ref{fgm5}), we have that (\ref{7}) holds uniformly in $y\in \mathfrak{D}_Y$ also. Hence, (\ref{cor313}) also holds in this case;

\textup{(ii)} If $\theta=-1$ and $P(Y=\beta_G)>0$, from (\ref{fgm5}), we have that (\ref{7}) holds uniformly in $y\in \mathfrak{D}_Y$ also. Hence, (\ref{cor313}) also holds in this case.

\end{rem}

\begin{exam}
Suppose that $(X,Y)$ follows an FGM distribution (\ref{fgm}) in which $F$ is unbounded and $Y$ satisfies
$$P(Y=1)=P(Y=2)=\frac{1}{2}.$$
Then, by Corollary \ref{co31}, we have that
$$P(XY>x)\sim \frac{1}{2}\Big (\overline{F}(x)+\overline{F}(x/2)\Big )+ \frac{\theta}{4}\Big (\overline{F}(x/2)-\overline{F}(x)\Big )$$
holds since $$G(y-)+G(y)-1=\left \{ \begin{array}{cc} -\frac{1}{2}, &y=1\\\frac{1}{2}, &
y=2.\end{array}\right .$$ Direct calculations yield that
$$P(XY>x)= \frac{1}{2}\Big (\overline{F}(x)+\overline{F}(x/2)\Big )+ \frac{\theta}{4}\Big (\overline{F}(x/2)-\overline{F}(x)\Big )-\frac{\theta}{4}\Big (\overline{F}^2(x/2)-\overline{F}^2(x)\Big ).$$
\end{exam}

\begin{cor}\label{co32} Assume that $(X,Y)$ follows a Sarmonov distribution (\ref{sam})-(\ref{sam2}).
 Furthermore, assume that
 there exist positive constants $c$ and $d_1$ and a function $\psi(\cdot)$ such that (\ref{corb20})-(\ref{cor21}) hold.
 Then we have
\begin{eqnarray}
P(XY>x)\sim\int_0^\infty(1+\theta d_1\psi(y))\overline F\Big(\frac{x}{y}\Big)G(dy)\label{cor30}.
\end{eqnarray}
\end{cor}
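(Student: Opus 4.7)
The strategy is to reduce the corollary to Lemma \ref{pr1} by identifying the correct weight function $h(\cdot)$ governing the Asimit relation (\ref{eq201}) and verifying Assumption \ref{ass1} for the Sarmanov pair $(X,Y)$. First I would compute $P(X>x|Y=y)$ for $y\in\mathfrak{D}_Y$ by passing to the limit as $\delta\downarrow 0$ in $P(X>x|Y\in(y-\delta,y+\delta])$. Using the joint distribution (\ref{sam}) together with $E\phi_1(X)=0$, a Fubini calculation yields
\[
P(X>x|Y\in(y-\delta,y+\delta])=\overline{F}(x)+\theta\,\frac{\int_{y-\delta}^{y+\delta}\phi_2(v)G(dv)}{G(y+\delta)-G(y-\delta)}\int_x^\infty\phi_1(u)F(du),
\]
so hypothesis (\ref{corb20}) gives, in the limit, $P(X>x|Y=y)=\overline{F}(x)+\theta\psi(y)\int_x^\infty\phi_1(u)F(du)$.

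Next I would invoke (\ref{cor20}) to establish $\int_x^\infty\phi_1(u)F(du)=d_1\overline{F}(x)(1+o(1))$: writing the integrand as $d_1+(\phi_1(u)-d_1)$ and using $\phi_1(u)\to d_1$, a routine $\varepsilon$-$N$ argument handles the error. Setting $h(y):=1+\theta d_1\psi(y)$, this rearranges to
\[
\frac{P(X>x|Y=y)}{h(y)\overline{F}(x)}-1=\frac{\theta\psi(y)}{h(y)}\cdot o(1),
\]
with the $o(1)$ term independent of $y$. To promote this pointwise statement to uniform convergence in $y\ge 0$, I would use (\ref{cor21}): since $h(y)\ge c>0$ holds $G$-almost surely, the identity $\theta\psi(y)/h(y)=(\theta d_1)^{-1}(1-h(y)^{-1})$ produces the uniform bound $|\theta\psi(y)/h(y)|\le|\theta d_1|^{-1}(1+c^{-1})$. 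This verifies Assumption \ref{ass1} with the positive measurable weight $h(y)=1+\theta d_1\psi(y)$.

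Finally, with Assumption \ref{ass1} verified and Assumption \ref{ass2} in force (as needed for Lemma \ref{pr1}), Lemma \ref{pr1} delivers
\[
P(XY>x)\sim\int_0^\infty\bigl(1+\theta d_1\psi(y)\bigr)\overline{F}(x/y)G(dy),
\]
which is (\ref{cor30}). The main obstacle is the uniformity step: the pointwise computation of $P(X>x|Y=y)$ is direct, but controlling the error term uniformly in $y$ is precisely what forces the positive-lower-bound assumption (\ref{cor21}), which prevents $h(y)$ from approaching zero and hence keeps $\theta\psi(y)/h(y)$ uniformly bounded.
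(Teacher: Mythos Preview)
Your proposal is correct and follows essentially the same route as the paper: compute $P(X>x\mid Y=y)$ from the Sarmanov density via the $\delta\downarrow 0$ limit and (\ref{corb20}), use (\ref{cor20}) to replace $\int_x^\infty\phi_1(u)F(du)$ by $d_1\overline F(x)(1+o(1))$, invoke (\ref{cor21}) to secure uniformity in $y$, and then apply Lemma~\ref{pr1} with $h(y)=1+\theta d_1\psi(y)$. Your write-up is more explicit than the paper's on the uniformity step, and you rightly flag that Assumption~\ref{ass2} is needed to invoke Lemma~\ref{pr1} (the paper's statement of the corollary omits it, though its proof uses it).
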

\begin{proof}
For any $y\in \mathfrak{D}_Y$, by (\ref{corb20}), simple calculations yield that
\begin{eqnarray}
P(X>x|Y=y)&=&\lim_{\delta\to 0+}P(X>x|Y\in (y-\delta,y+\delta])\nonumber\\
&=&  \overline{F}(x)+\theta \psi(y)\int_x^\infty \phi_1(u)F(du). \nonumber\label{fgm1}
\end{eqnarray}
It follows from (\ref{cor20}) and (\ref{cor21}) that
\begin{eqnarray*}
P(X>x|Y=y)
\sim(1+\theta d_1\psi(y))\overline F(x)
\end{eqnarray*}
holds uniformly for $y\in \mathfrak{D}_Y$. Hence, relation (\ref{cor30}) follows from Lemma  \ref{pr1} immediately by letting
\begin{equation}\label{samh}h(y)=1+\theta d_1\psi(y),\ ~y\in \mathfrak{D}_Y.\end{equation}
\end{proof}
\begin{rem}\label{rem2}
  A similar relation can be found in Lemma 3.1 of Yang and Wang (2013) in which $F$ has a dominatedly varying tail, i.e.
  $$\limsup \frac{\overline{F}(xy)}{\overline{F}(x)}<\infty$$
  holds for all $0<y<1$. We remark that taking $$\phi_1(x)= 2F(x)-1,~\phi_2(y)= 2G(y)-1$$
  leads to an FGM distribution. In this case the $\psi(\cdot)$ in (\ref{corb20}) is defined as
  \begin{eqnarray*}
\psi(y)=\lim_{\delta\to 0+}\frac{\int_{y-\delta}^{y+\delta}\phi_2(v)G(dv)}{G(y+\delta)-G(y-\delta)}= G(y)+G(y-)-1
\end{eqnarray*}
for $y\in \mathfrak{D}_Y$ and $d_1=1$ in (\ref{cor20}). Furthermore, if $|\theta|<1$, then we can take $c=1-|\theta|>0$ in (\ref{cor21}).

\end{rem}

\begin{lemma}\label{lem0}
Suppose that $X$ and $Y$ are independent r.v.s. satisfying Assumption \ref{ass2}.\\ 
\textup{(i)}  If $F\in\mathcal{L}(\gamma)$ for some $\gamma\geq 0$,
 then we have that $H\in \mathcal{L}(\gamma/\beta_G)$;\\
 \textup{(ii)}  If $F\in\mathcal{S}(\gamma)$ for some $\gamma\geq 0$,
 then we have that $H\in \mathcal{S}(\gamma/\beta_G)$.
 \end{lemma}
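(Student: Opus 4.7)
The plan is to exploit that, by independence (equivalently by Lemma~\ref{pr1} with $h\equiv 1$),
\[
\overline H(x)=\int_0^{\beta_G}\overline F(x/y)\,G(dy),
\]
and to localize the mass of this integral near $y=\beta_G$ so that the factor $e^{\gamma t/y}$ produced pointwise by $F\in\mathcal{L}(\gamma)$ collapses to $e^{\gamma t/\beta_G}$. For part~(i) in the case $\beta_G<\infty$, the key auxiliary observation is that $F\in\mathcal{L}(\gamma)$ with $\gamma>0$ forces $\overline F(\alpha x)/\overline F(x)\to 0$ for every $\alpha>1$: writing $\overline F(x)=L(x)e^{-\gamma x}$ with $L$ long-tailed gives $L(x)=e^{o(x)}$, hence $\overline F(\alpha x)/\overline F(x)=e^{o(x)-\gamma(\alpha-1)x}\to 0$. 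With this, for any $\eta\in(0,\beta_G)$ the bound $\int_0^{\beta_G-\eta}\overline F(x/y)\,G(dy)\le\overline F(x/(\beta_G-\eta))$, combined with the lower bound $\overline H(x)\ge P(Y\ge\beta_G-\eta/2)\,\overline F(x/(\beta_G-\eta/2))$, forces the small-$y$ contribution to be $o(\overline H(x))$. On $[\beta_G-\eta,\beta_G]$ the shift $t/y$ lies in a compact set, so the uniform version of $F\in\mathcal{L}(\gamma)$ yields $\overline F((x-t)/y)\sim e^{\gamma t/y}\overline F(x/y)$ uniformly in $y$; sandwiching $e^{\gamma t/y}\in[e^{\gamma t/\beta_G},e^{\gamma t/(\beta_G-\eta)}]$ and letting $\eta\to 0+$ then gives $\overline H(x-t)/\overline H(x)\to e^{\gamma t/\beta_G}$. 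For $\gamma=0$ the factor $e^{\gamma t/y}$ is trivial and the small-$y$ remainder is handled instead by the fact that $G(y_0)/P(Y\ge y_0)\to 0$ as $y_0\to 0+$.

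When $\beta_G=\infty$ the conclusion is $H\in\mathcal{L}$. Assumption~\ref{ass2} enters through Lemma~3.2 of Tang (2006), which furnishes $b(\cdot)$ with $b(x)\uparrow\infty$, $b(x)/x\downarrow 0$, $\overline G(b(x))=o(\overline H(x))$; the tail $\int_{b(x)}^\infty\overline F(x/y)\,G(dy)\le\overline G(b(x))$ is then negligible, and on $[y_0,b(x)]$ with $y_0>0$ fixed the argument runs as before since $t/y\le t/y_0$ is bounded. The sub-case $\gamma>0$, $\beta_G=\infty$ requires pushing the factor $e^{\gamma t/y}$ itself to $1$; I would subdivide $[y_0,b(x)]$ at a slowly growing level $M$, use $e^{\gamma t/y}\le e^{\gamma t/M}\to 1$ on $[M,b(x)]$, and control the piece over $[y_0,M]$ via Assumption~\ref{ass2}.

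For part~(ii), once $H\in\mathcal{L}(\gamma/\beta_G)$ is established from~(i), the remaining task is the convolution asymptotic $\overline{H^{*2}}(x)\sim 2c_H\overline H(x)$ with $c_H=\int e^{(\gamma/\beta_G)z}H(dz)$. Writing $\overline{H^{*2}}(x)=\int\overline H(x-y)\,H(dy)$, I would split at a truncation $M\to\infty$, apply $H\in\mathcal{L}(\gamma/\beta_G)$ together with a Potter-type dominator on $\{y\le M\}$ to pass to the limit by dominated convergence and obtain $c_H\overline H(x)$, and produce the symmetric $c_H\overline H(x)$ contribution from $\{y>M\}$ by transferring the $\mathcal{S}(\gamma)$-property of $F$ through the integral representation. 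The main obstacle is extracting the uniform Potter-type domination of $\overline H(x-y)/\overline H(x)$ from the corresponding property of $F$; this leans on the localization of the $H$-mass established in~(i) and on the $\mathcal{S}(\gamma)$-information about $F$, which together allow the ``both-large'' contribution $P(Z_1>M,\ Z_2>M,\ Z_1+Z_2>x)$ to be absorbed.
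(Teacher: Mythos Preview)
Your proposal takes a completely different route from the paper. The paper's proof of this lemma is entirely by citation: it notes that $P(XY>x)=P(X^{+}Y>x)$ for $x>0$, invokes Corollary~1.1 of Tang (2008) to handle discontinuities, and then simply collects the relevant sub-cases from the literature --- Theorem~1.1 of Tang (2008) and Lemma~A.4 of Tang--Tsitsiashvili (2004) for part~(i), and Theorem~1.1 of Tang (2006a), Theorem~1.2 of Tang (2008), and Theorem~1.1 of Xu et al.\ (2016) for part~(ii). No direct argument is given. You, by contrast, are attempting a self-contained proof.

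For part~(i) your direct approach is essentially sound and in fact recovers the core ideas behind the cited results. The observation that $F\in\mathcal{L}(\gamma)$ with $\gamma>0$ forces $\overline F(\alpha x)/\overline F(x)\to 0$ for every $\alpha>1$ (rapid variation) is exactly the engine needed to localize the $G$-integral near $\beta_G$. Two minor inaccuracies: the $\gamma=0$, $\beta_G<\infty$ sub-case is not handled by ``$G(y_0)/P(Y\ge y_0)\to 0$'' as stated (this fails if $P(Y=0)>0$; the mass at $0$ is harmless for the tail, but you should say so or condition on $Y>0$), and in the $\gamma>0$, $\beta_G=\infty$ sub-case it is again rapid variation of $\overline F$, not Assumption~\ref{ass2} per se, that kills the $[y_0,M]$ piece (compare $\overline F(x/M)$ with $\overline H(x)\ge P(Y>2M)\overline F(x/(2M))$).

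For part~(ii) there is a genuine gap. Proving $H\in\mathcal{S}(\gamma/\beta_G)$ from $F\in\mathcal{S}(\gamma)$ is precisely the substantial content of Tang (2006a, Theorem~1.1) and Tang (2008, Theorem~1.2), and it requires more than your outline supplies. The step you flag as ``the main obstacle'' --- controlling the both-large term $P(Z_1>M,\ Z_2>M,\ Z_1+Z_2>x)$ --- is the crux, and ``localization of the $H$-mass established in (i)'' does not resolve it. One must return to the product representation $Z_i=X_iY_i$, condition on $(Y_1,Y_2)$, and exploit the convolution-equivalence of $F$ together with a Kesten-type uniform bound to show this term is $o(\overline H(x))$; your plan contains no mechanism for feeding $F\in\mathcal{S}(\gamma)$ into an estimate for $\overline{H^{*2}}$. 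As written, part~(ii) is an assertion rather than a proof. Either supply that missing argument or, as the paper does, appeal to the cited theorems.
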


  \begin{proof}Since $Y$ is nonnegative, we have that
  $$P(XY>x)=P(X^+Y>x)$$
  holds for all $x>0$, where  $X^+=XI(X\geq 0)$ and  $I(A)$ is the indicator function of the set $A$.
From Corollary 1.1 of Tang (2008), Assumption \ref{ass2} implies that
 $\overline G(x/d)-\overline G((x+1)/d)=o(\overline H(x))$
holds for all $d\in D[F]$ if $D[F]\neq\emptyset $, where $D[F]$ represents the set of all positive discontinuities of $F$. Hence, part (i) follows from Theorem 1.1 of Tang (2008)(for $\beta_G=\infty$) and from Lemma A.4 of Tang and Tsitsiashvili(2004)(for $\beta_G<\infty$); When $\beta_G=\infty$, part (ii) follows from Theorem 1.1 in Xu et al. (2016)(for $\gamma=0$) and from Theorem 1.2 of Tang (2008)(for $\gamma>0$); and when $\beta_G<\infty$, part (ii) follows from Theorem 1.1 of Tang (2006a).\end{proof}

  Now we are standing in a position to prove Theorem \ref{th1}.
\begin{proof}[Proof of Theorem \ref{th1}.] Let $Y_h$ be an r.v. with a distribution $G_h(dy)=h(y)G(dy)$, independent of $X$. Clearly, $G_h$ is a proper distribution since $E(h(Y)) = 1$. Hence, from Lemma \ref{pr1},  we have that
\begin{eqnarray}
P(XY>x) &\sim& P(XY_h>x)\label{pfth1},
\end{eqnarray}
  Note that $h(y)$ is bounded above in $y\in \mathfrak{D}_Y$, that is,   there is a positive constant $M$ such that $h(y)\leq M$ holds for all $y\in \mathfrak{D}_Y$.
Hence, we have that $\overline G_h(bx)=o(P(XY_h>x))$ holds for all constant $b>0$ since
 $$\overline G_h(bx)=\int_{bx}^\infty h(y)G(dy)\leq M\overline G(b x).$$
 On the other hand, since $h$ is positive on $\mathfrak{D}_Y$, it is obvious that $G$ and $G_h$ have a common right endpoint, i.e. $\beta_{G_h}=\beta_G$.
 Hence, Theorem \ref{th1} follows from Lemma \ref{lem0} and (\ref{pfth1}).
 \end{proof}
 
  At the end of this section, we give the proofs of Corollaries \ref{co21} and \ref{co22}:
\begin{proof}[Proof of Corollary \ref{co21}] From the proof of Corollary \ref{co31}, $(X,Y)$ satisfies Assumption \ref{ass1} for $h(\cdot)$ as in (\ref{fgmh}). Hence, Corollary \ref{co21} follows from Theorem \ref{th1} immediately.
\end{proof}
\begin{proof}[Proof of Corollary \ref{co22}] From the proof of Corollary \ref{co32}, $(X,Y)$ satisfies Assumption \ref{ass1} for $h(\cdot)$ as in (\ref{samh}). Hence, Corollary \ref{co22} follows from Theorem \ref{th1} immediately.
\end{proof}

\section{Proof of Theorem \ref{thm2} and its corollaries}
\

 
In this section, we will prove Theorem \ref{thm2} and its corollaries.
\begin{proof}[Proof of Theorem \ref{thm2}]
 We use a similar method as in the proof of Theorem 3.1 of Chen (2011). 
Note that $$P(S_n>x)\leq\Psi(x,n)\leq P(\sum_{i=1}^mX_i^+\prod_{j=1}^iY_j>x),$$ where $X_i^+$ denote the positive part of $X_i,i=1,2,\cdots $. Recall that
\begin{eqnarray*}
S_n=\sum_{i=1}^nX_i\prod_{j=1}^iY_j\overset{\textup{d}}{=}\sum_{i=1}^nX_i\prod_{j=i}^nY_j:=T_n,\label{16}
\end{eqnarray*}
due to the i.i.d. assumption for the sequence $\{(X_i,Y_i),i\geq 1\}$, where $\overset{\textup{d}}{=}$ stands for equality in distribution.
Therefore, we only need to prove that the relation
\begin{eqnarray}
P(T_n>x)\sim\sum_{i=1}^n\overline H_i(x)\label{12}
\end{eqnarray}
holds. We proceed by induction on $n$: Note that\\
(1:1) $ \overline G(bx)=o(\overline H(x))$ holds for all constant $b>0$,\\
(1:2) relation (\ref{12}) trivially holds for $n=1$, \\
(1:3) $ T_1=X_1Y_1$  follows a subexponential distribution.

 Now we assume that:\\
(n:1) $\overline G(bx)=o(\overline H_n(x))$ holds for all constant $b>0$,\\
(n:2) relation (\ref{12}) holds for $n$,\\
(n:3) both $ X_n\prod_{j=1}^nY_j$ and $T_n$ follow subexponential distributions.

 We aim to prove that:\\
(n+1:1) $\overline G(bx)=o(\overline H_{n+1}(x))$ holds for all constant $b>0$,\\
(n+1:2) relation (\ref{12}) holds for  n+1,\\
(n+1:3) both $X_{n+1}\prod_{j=1}^{n+1}Y_j$ and $T_{n+1}$ follow subexponential distributions.

Using a similar method as in the proof of Theorem 3.1 of Chen (2011), we have that
\begin{eqnarray}
P(T_n+X_{n+1}>x)\sim P(T_n>x)+P(X_{n+1}>x) \label{15}
\end{eqnarray}
holds since $T_n$ and $X_{n+1}$ are independent.
Consider $(n+1:1)$, Since $ (X_i,Y_i),i=1,2,\cdots$ are i.i.d. random vectors,  we have
\begin{eqnarray}
X_{n+1}\prod_{i=1}^{n+1}Y_i\overset{\textup{d}}{=}\Big(X_n\prod_{i=1}^nY_j\Big)Y_{n+1},\label{th210}
\end{eqnarray}
which yields that $\overline G(bx)=o(\overline H_{n+1}(x))$ holds for all constant $b>0$ from Corollary 1.1 of Tang(2008) and the induction hypothesis (n:1).

Next we consider $(n+1:2)$.
From Lemma 3.2 of Tang (2006) and Assumption \ref{ass2}, there exists a function $b(\cdot):[0,\infty)\to(0,\infty)$ such that
$ b(x)\uparrow\infty,
 \frac{b(x)}{x}\downarrow0$ and
\begin{equation}\label{lem101}\overline G(b(x))=o(P(X_{n+1}Y_{n+1}>x)).\end{equation}
 Note that
\begin{eqnarray}
 P(T_{n+1}>x)&=&P((X_{n+1}+T_n)Y_{n+1}>x)\nonumber\\
 &=& \int_0^{b(x)}P(X_{n+1}+T_n>\frac{x}{y}|Y_{n+1}=y)G(dy)+O(\overline G(b(x))). \label{th201}
\end{eqnarray}
  Since both $X_{n+1}$ and $T_n$ follow long-tailed distributions, by Corollary 2.5 of Cline and Samorodnitsky (1994), there is a function $a(\cdot):[0,\infty)\to(0,\infty)$ such that $a(x)\uparrow\infty,
 \frac{a(x)}{x}\downarrow0$ and
  \begin{equation}\label{pfth201}P(X_{n+1}>x-a(x))\sim P(X_{n+1}>x)~\textup{ and }~P(T_n>x-a(x))\sim P(T_n>x).\end{equation}
 We split the conditional probability $P(X_{n+1}+T_n>\frac{x}{y}|Y_{n+1}=y)$ in (\ref{th201}) into four parts as
\begin{eqnarray}
P(X_{n+1}+T_n>\frac{x}{y}|Y_{n+1}=y)=I_1(x,y)+I_2(x,y)+I_3(x,y)+I_4(x,y)\label{19},
\end{eqnarray}
where
\begin{eqnarray*}
&&I_1(x,y)=P(X_{n+1}+T_n>\frac{x}{y},T_n\in[-a\Big(\frac{x}{y}\Big),a\Big(\frac{x}{y}\Big)]|Y_{n+1}=y),\\
&&I_2(x,y)=P(X_{n+1}+T_n>\frac{x}{y},T_n>\frac{x}{y}-a\Big(\frac{x}{y}\Big)|Y_{n+1}=y),\\
&&I_3(x,y)=P(X_{n+1}+T_n>\frac{x}{y},T_n<-a\Big(\frac{x}{y}\Big)|Y_{n+1}=y),\\
&&I_4(x,y)=P(X_{n+1}+T_n>\frac{x}{y},T_n\in[a\Big(\frac{x}{y}\Big),\frac{x}{y}-a\Big(\frac{x}{y}\Big)]|Y_{n+1}=y).
\end{eqnarray*}
First we estimate $I_1(x,y)$: Since $T_n$ is independent of $(X_{n+1},Y_{n+1})$, from Assumption \ref{ass1} and (\ref{pfth201}), we have that
 \begin{eqnarray}
 I_1(x,y)&=&\int_{-a(\frac{x}{y})}^{a(\frac{x}{y})}P(X_{n+1}>\frac{x}{y}-u|Y_{n+1}=y)P(T_n\in du)\nonumber\\
 &\sim& \int_{-a (\frac{x}{y} )}^{a (\frac{x}{y} )}h(y)P(X_{n+1}>\frac{x}{y}-u )P(T_n\in du)\nonumber\\
 &\sim& \int_{-a (\frac{x}{y} )}^{a (\frac{x}{y} )}h(y)P(X_{n+1}>\frac{x}{y})P(T_n\in du)\nonumber\\
 &=&h(y)P(X_{n+1}>\frac{x}{y})P(T_n\in[-a\Big(\frac{x}{y}\Big),a\Big(\frac{x}{y}\Big)])\nonumber\\
 &\sim& h(y)P(X_{n+1}>\frac{x}{y})\label{20}
  \end{eqnarray}
  holds uniformly for all $y \in(0,b(x))$. Here the uniformity is understood as
  $$\lim_{x\to\infty} \sup_{y \in(0,b(x))}\left |\frac{  I_1(x,y)}{h(y)P(X_{n+1}>\frac{x}{y})}-1\right |=0.$$

Next, we estimate $I_2(x,y)$: Note that
  \begin{eqnarray*}
 P(X_{n+1}+T_n>\frac{x}{y},X_{n+1}\in\Big[ -a\Big(\frac{x}{y}\Big),a\Big(\frac{x}{y}\Big)\Big]|Y_{n+1}=y)\leq I_2(x,y)\leq P(T_n>\frac{x}{y}-a\Big(\frac{x}{y}\Big)|Y_{n+1}=y),
 \end{eqnarray*}
  Since $T_n$ and $(X_{n+1},Y_{n+1})$ are independent, we can easily obtain that
 \begin{eqnarray}
 I_2(x,y)\sim P(T_n>\frac{x}{y})\sim \sum_{i=1}^n\overline{H}_i\Big(\frac{x}{y}\Big)\label{21}
 \end{eqnarray}
 holds uniformly for $y\in(0,b(x))$. In fact,
 \begin{eqnarray*}
&& P(X_{n+1}+T_n>\frac{x}{y},X_{n+1}\in\Big[ -a\Big(\frac{x}{y}\Big),a\Big(\frac{x}{y}\Big)\Big]|Y_{n+1}=y)\\
 &=&\int_{-a(\frac{x}{y})}^{a(\frac{x}{y})}P\Big(T_n>\frac{x}{y}-u\Big)P(X_{n+1}\in du|Y_{n+1}=y)\\
 &\sim&\int_{-a(\frac{x}{y})}^{a(\frac{x}{y})}P\Big(T_n>\frac{x}{y}\Big)P(X_{n+1}\in du|Y_{n+1}=y)\\
 &=&P\Big(T_n>\frac{x}{y}\Big)P\Big(X_{n+1}\in\Big[ -a\Big(\frac{x}{y}\Big),a\Big(\frac{x}{y}\Big)\Big]|Y_{n+1}=y\Big).
 \end{eqnarray*}
Now we estimate $I_3(x,y)$: It follows that
\begin{eqnarray}
I_3(x,y)&\leq& P(X_{n+1}>\frac{x}{y}+a\Big(\frac{x}{y}\Big),T_n\leq -a\Big(\frac{x}{y}\Big) |Y_{n+1}=y)\nonumber\\
&\sim& h(y)P(X_{n+1}>\frac{x}{y})P(T_n\leq -a\Big(\frac{x}{y}\Big))\nonumber\\
&=&o(h(y)P(X_{n+1}>\frac{x}{y}))\label{22}.
\end{eqnarray}
holds uniformly for all $y\in(0,b(x))$. Here the uniformity is understood as
  $$\lim_{x\to\infty} \sup_{y \in(0,b(x))}\frac{  I_3(x,y)}{h(y)P(X_{n+1}>\frac{x}{y})}=0.$$
Finally, we estimate $I_4(x,y)$:
 It follows from (\ref{15}) that
\begin{eqnarray}
I_4(x,y)&=&\int_{a (\frac{x}{y} )}^{\frac{x}{y}-a (\frac{x}{y} )}P(X_{n+1}>\frac{x}{y}-u|Y_{n+1}=y)P(T_n\in du)\nonumber\\
&\sim& \int_{a (\frac{x}{y} )}^{\frac{x}{y}-a (\frac{x}{y} )}h(y)P(X_{n+1}>\frac{x}{y}-u)P(T_n\in du)\nonumber\\
&=&h(y)P(X_{n+1}+T_n>\frac{x}{y},a\Big(\frac{x}{y}\Big)\leq T_n\leq \frac{x}{y}-a\Big(\frac{x}{y}\Big))\nonumber\\
&=&o(h(y)(P(X_{n+1}>\frac{x}{y})+P(T_n>\frac{x}{y})))\label{23},
\end{eqnarray}
holds uniformly for all $y\in (0,b(x))$.
 Substituting relations (\ref{20})-(\ref{23}) into (\ref{19}), we have that
\begin{equation*}P(X_{n+1}+T_n>\frac{x}{y}|Y_{n+1}=y)\sim h(y)P(X_{n+1}>\frac{x}{y})+\sum_{i=1}^n\overline{H}_i\Big(\frac{x}{y}\Big)
\end{equation*}
holds uniformly for all $y\in(0,b(x))$, which yields that
\begin{eqnarray*}
&&\int_0^{b(x)}P(X_{n+1}+T_n>\frac{x}{y}|Y_{n+1}=y)G(dy)\\&\sim& \int_0^{b(x)}P(X_{n+1}>\frac{x}{y}|Y_{n+1}=y)G(dy)+\sum_{i=1}^n\int_0^{b(x)}P(X_i\prod_{j=1}^iY_j>\frac{x}{y})G(dy)\\
&\sim &\sum_{i=1}^{n+1}\overline H_i(x)+O(\overline{G}(b(x))\\
&\sim &\sum_{i=1}^{n+1}\overline H_i(x).
\end{eqnarray*}

Finally, we consider $(n+1:3)$. By (\ref{th210}), $X_{n+1}\prod_{i=1}^{n+1}Y_i\in \mathcal{S}$ follows from $H_n\in\mathcal{S}$ and Lemma \ref{lem0}.
The proof of that $T_{n+1}$ follows a subexponential distribution is similar to that of Theorem 3.1 in Chen (2011), we omitted the detail.
\end{proof}

At the end of this section, we give the proofs of Corollaries \ref{co41} and \ref{co42}:
\begin{proof}[Proof of Corollary \ref{co41}.] From the proof of Corollary \ref{co31}, $(X,Y)$ satisfies Assumption \ref{ass1} for $h(\cdot)$ as in (\ref{fgmh}). Hence, Corollary \ref{co41} follows from Theorem \ref{thm2} immediately.  \end{proof}

\begin{proof}[Proof of Corollary \ref{co42}.]  From the proof of Corollary \ref{co32}, $(X,Y)$ satisfies Assumption \ref{ass1} for $h(\cdot)$ as in (\ref{samh}). Hence,
 Corollary \ref{co42} follows from Theorem \ref{thm2}.
\end{proof}



\begin{thebibliography}{99}

\bibitem{asimit2008}Asimit A. V., Jones B. L. 2008. Dependence and the asymptotic behavior of large claims reinsurance. Insurance:
Mathematics and Economics 43(3), 407-411.

\bibitem{asimit2010}Asimit A. V., Badescu A. L. 2010. Extremes on the discounted aggregate claims in a time dependent risk model.
Scandinavian Actuarial Journal 2010(2), 93-104.



\bibitem{cheny2011} Chen Y.. 2011, The finite-time ruin probability with dependent insurance and financial risks.
J. Appl. Probab., 2011, 48: 1035-1048.



\bibitem{cline1994}  Cline D.B.H., Samorodnitsky G. 1994.  Subexponential of the product of independent random variables. {\it Stoch. Process. Their Appl.}, 49(1), 75-98.


\bibitem{foss2013} Foss S., Korshunov D. and Zachary S., 2013, An Introduction to Heavy-tailed and Subexponential Distributions. Springer, Second Edition.

\bibitem{tang2006a}Tang Q., 2006a, On convolution equivalence with applications. Bernoulli 12(3), 535-549.

\bibitem{tang2006b}Tang Q., 2006b, The subexponentiality of products revisited. Extremes 9(3-4), 231-241.

\bibitem{tang2008} Tang Q., 2008, From light tails to heavy tails through multiplier. {\it Extremes.}, 11, 379--391.

\bibitem{tang2003}Tang Q., Tsitsiashvili G., 2003, Precise estimates for the ruin probability in finite horizon in a discretetime
model with heavy-tailed insurance and financial risks. Stoch. Process. Their Appl. 108(2), 299-325.

 \bibitem{tang2004}Tang Q., Tsitsiashvili G., 2004, Finite and infinite time ruin probabilities in the presence of stochastic returns on investments. Adv. Appl. Probab. 36(4), 1278-1299.

\bibitem{xu2016}Xu H., Cheng F., Wang Y., 2016, The equivalent condition of the closure property under the product convolution for subexponential distribution. http://arxiv.org/abs/1605.02319

\bibitem{yangh2016} Yang H., Gao W., Li J. 2016. Asymptotic ruin probabilities for a discrete-time risk model with dependent insurance and financial risks, Scandinavian Actuarial Journal, 2016:1, 1-17.

\bibitem{yangy2015}Yang Y., Konstantinides D. 2015. Asymptotics for ruin probabilities in a discrete-time risk model with dependent financial and insurance risks, Scandinavian Actuarial Journal, 2015:8, 641-659.

\bibitem{yangy2012}Yang Y., Leipus R. , \u Siaulys, J. 2012. Tail probability of randomly weighted sums of subexponential random variables
under a dependence structure. Statist. Probab. Lett. 82(9), 1727-1736.

\bibitem{yh}Yang H.  Sun S. 2013. Subexponentiality of the product of dependent random variables. Statist. Probab. Lett. 83(9), 2039-2044.

\bibitem{yangy2013} Yang Y., Wang Y. 2013. Tail behavior of the product of two dependent random variables with applications to risk theory. Extremes, 16: 55-74.

\end{thebibliography}
\end{document}